\def\lf{\left}
\def\ri{\right}
\def\id{\mbox{id}}
\def\p{\partial}
\def\R{\mathbb{R}}
\def\vv<#1>{\langle#1\rangle}
\def\XXint#1#2{\setbox0=\hbox{$#1{#2}{\int}$}{#2}\kern-.5\wd0 }
\def\XXint#1#2#3{{\setbox0=\hbox{$#1{#2#3}{\int}$}
     \vcenter{\hbox{$#2#3$}}\kern-.5\wd0}}
\def\vv<#1>{\langle#1\rangle}
\def\pt{\frac{\p}{\p t}}
\def\pu{\frac{\p}{\p u}}
\newtheorem{thm}{Theorem}[section]
\newtheorem{lem}{Lemma}[section]
\theoremstyle{definition}
\newtheorem{defn}{Definition}[section]
\theoremstyle{remark}
\numberwithin{equation}{section}
\def\dev{{\rm dev}}
\begin{document}

\title{de Rham decomposition for Riemannian manifolds with boundary}

\author{Chengjie Yu$^1$}
\address{Department of Mathematics, Shantou University, Shantou, Guangdong, 515063, China}
\email{cjyu@stu.edu.cn}

\thanks{$^1$Research partially supported by GDNSF with contract no. 2021A1515010264 and NNSF of China with contract no. 11571215.}

\renewcommand{\subjclassname}{%
  \textup{2010} Mathematics Subject Classification}
\subjclass[2010]{Primary 35C12; Secondary 53C29}
\date{}
\keywords{de Rham decomposition, parallel distribution, development}
\begin{abstract}
In this paper, we extend the classical de Rham decomposition theorem to the case of Riemannian manifolds with boundary by using the trick of development of curves.
\end{abstract}
\maketitle\markboth{Chengjie Yu }{de Rham decomposition}
\section{Introduction}
Let $(M^n,g)$ be a simply connected complete Riemannian manifold with two nontrivial parallel distributions $T_1$ and $T_2$ that are orthogonal complement of each other. Then, $M$ is isometric to a Riemnnian product $M_1\times M_2$ with $M_i$ a maximal integral submanifold of $T_i$ for $i=1,2$.
This is a classical result in differential geometry obtained by de Rham \cite{dR} in 1952. In 1962, Wu \cite{Wu} extended the result to simply connected complete semi-Riemannian manifolds. The strategy of de Rham's proof is to patch up local product decompositions to a global one. This strategy was taken up and presented in a modern form by Maltz \cite{Ma} using an idea for patching up local isometries by O'Neil \cite{Ne}. Wu's strategy of proof is different. He used the theorem of Cartan-Ambrose-Hicks to construct a global isometry from $M_1\times M_2$ to $M$.  In fact, Maltz \cite{Ma} extended de Rham's decomposition Theorem to complete affine manifolds. The de Rham decomposition Theorem was also extended to non-simply connected manifolds by Eschenburg-Heintz \cite{EH} and to geodesic spaces by Foertsch-Lytchak \cite{FL}. The uniqueness of Wu's de Rham decomposition for indefinite metrics was just shown by Chen \cite{Ch} recently.

In this paper, we extend de Rham's decomposition theorem to complete Riemannian manifolds with boundary. Here, for completeness of a Riemannian manifold we mean metric completeness. Note that if a Riemannian manifold with boundary is decomposable, then it must be decomposed as a product of a Riemannian manifold with boundary and a Riemannian manifold without boundary because of the smoothness of the boundary. This implies that the outward normal on the boundary must be all contained in one of the two parallel distributions. Our result confirms the converse of the above observation.
\begin{thm}\label{thm-main}
Let $(M^n,g)$ be a simply connected complete Riemannian manifold with boundary. Let $T_1$ and $T_2$ be two nontrivial parallel distributions that are orthogonal complements of each other. Suppose that $T_1$ contains the normal vectors on $\p M$. Let $p$ be an interior point of $M$ and $\iota_i:(M_i,p_i)\hookrightarrow (M,p)$ be the simply connected leaf of the foliation $T_i$ passing through $p$ for $i=1,2$. Then, $M_1$ is a manifold with boundary and $M_2$ is a manifold without boundary, and moreover, there is an isometry  $f:M_1\times M_2\to M$ such that $f(p_1,p_2)=p$ and $f_{*(p_1,p_2)}={\iota_1}_{*p_1}+{\iota_2}_{*p_2}$.
\end{thm}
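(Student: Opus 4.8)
\noindent\textbf{Proof strategy.} The plan is to build $f$ by \emph{developing} the leaf $M_2$ along curves in $M_1$, in the spirit of Wu's and Maltz's arguments but adapted to the boundary. I would begin with the standard preliminaries: a parallel distribution is integrable with totally geodesic leaves, parallel transport preserves the splitting $TM=T_1\oplus T_2$, and every point of $M$ has a neighbourhood that splits as a Riemannian product adapted to $(T_1,T_2)$. Since the outward normal of $\partial M$ lies in $T_1$, along $\partial M$ one has $T_2\subset T\partial M$; hence a leaf of $T_2$ meeting $\partial M$ lies entirely in $\partial M$, while a leaf of $T_1$ meets $\partial M$ in a hypersurface of that leaf. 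It follows that $M_2$, through the interior point $p$, stays in the interior and is a manifold without boundary, whereas $M_1$ is a manifold with boundary $M_1\cap\partial M$. Using metric completeness of $M$ and total geodesy of the leaves, I would next check that $M_1$ (as a manifold with boundary) and $M_2$ are complete --- here one must be careful that $M$ is only metrically, not geodesically, complete --- so that $M_1\times M_2$ is complete with boundary $\partial M_1\times M_2$.

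\noindent\textbf{The development map.} For a piecewise smooth curve $\gamma\colon[0,1]\to M_1$ from $p_1$ to $x_1$ and a point $y\in M_2$, I would define the developed curve $\tilde\gamma_y\colon[0,1]\to M$ as the solution of the ODE prescribed, in each adapted local product chart, by requiring that $\tilde\gamma_y(t)$ lie on the same $T_2$-leaf as $\iota_1\circ\gamma(t)$ and that $\tilde\gamma_y'(t)\in T_1$ correspond to $\gamma'(t)$ under the local product identification; equivalently $\tilde\gamma_y$ is the horizontal lift of $\gamma$ through $y$ for the flat partial connection furnished by the local product structures. Because $|\tilde\gamma_y'(t)|=|\gamma'(t)|$, this curve has finite length, so it remains in a bounded subset of $M$ and extends to all of $[0,1]$ by completeness; and where $\gamma$ touches $\partial M_1$ the defining vector field is tangent to $\partial M$, so $\tilde\gamma_y$ never exits $M$. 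Setting $\Phi_\gamma(y):=\tilde\gamma_y(1)$, the hypotheses that $T_1,T_2$ are parallel and totally geodesic give that $\Phi_\gamma\colon M_2\to M$ is an isometric embedding onto the $T_2$-leaf through $\iota_1(x_1)$, with $\Phi_\gamma(p_2)=\iota_1(x_1)$ and $\Phi_\gamma=\iota_2$ when $\gamma$ is constant.

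\noindent\textbf{Assembling and verifying $f$.} Since $M_1$ is simply connected, two curves from $p_1$ to $x_1$ are homotopic rel endpoints; the holonomy of the partial connection around a null-homotopic loop is trivial, this being precisely the flatness forced by parallelism of $T_1$ and $T_2$. Hence $\Phi_\gamma$ depends only on $x_1$, and I would define $f(x_1,x_2):=\Phi_\gamma(x_2)$, which is smooth, satisfies $f(p_1,p_2)=\iota_2(p_2)=p$, and has $f(\cdot,p_2)=\iota_1$ and $f(p_1,\cdot)=\iota_2$, so that $f_{*(p_1,p_2)}={\iota_1}_{*p_1}+{\iota_2}_{*p_2}$. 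To see $f$ is a local isometry, fix $(x_1,x_2)$: the curve $s\mapsto f(\gamma(s),x_2)$ for $\gamma$ unit speed in $M_1$ is exactly the development $\tilde\gamma_{x_2}$, hence unit speed in $M$ and tangent to $T_1$, so $f_*$ maps $T_{x_1}M_1\oplus 0$ isometrically into $T_1$; similarly $f(x_1,\cdot)=\Phi_\gamma$ maps $0\oplus T_{x_2}M_2$ isometrically into $T_2$; as $T_1\perp T_2$ the differential is a linear isometry. Finally, $f$ is a local isometry from the complete manifold-with-boundary $M_1\times M_2$ onto $M$ carrying $\partial M_1\times M_2$ into $\partial M$, hence --- by the with-boundary version of the classical fact that a complete local isometry is a covering, via path lifting using metric completeness --- it is a covering map; as $M$ is simply connected, $f$ is a diffeomorphism and thus an isometry.

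\noindent\textbf{Expected main obstacle.} The classical core, path independence of the development, goes through as usual via simple connectedness and flatness from parallelism. The genuinely new work, and where the assumption on $\partial M$ enters, is the boundary bookkeeping: proving that $M_2$ misses $\partial M$ while $M_1$ picks up $M_1\cap\partial M$ as a smooth boundary, and that both are complete in the appropriate sense although $M$ is only metrically complete; verifying that the developed curves $\tilde\gamma_y$ stay inside $M$ because the generating vector field is tangent to $\partial M$ along $\partial M_1$; and establishing the with-boundary analogue of ``complete local isometry $\Rightarrow$ covering''. I expect these boundary issues --- especially reconciling metric completeness with the development and with the covering argument --- to be the principal difficulty.
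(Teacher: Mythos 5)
Your plan is the classical de Rham/Maltz route (horizontal lifts of curves in $M_1$ for the flat partial connection given by the local product structure, path-independence from simple connectedness, then ``complete local isometry $\Rightarrow$ covering''), which is genuinely different from the paper's route (developments of curves plus a with-boundary Cartan--Ambrose--Hicks theorem, Theorem \ref{thm-CAH}, proved via a generalized Jacobi equation and applied in \emph{both} directions). Much of your outline is workable, but two steps are problematic as written. First, the justification that the lifts stay in $M$ is wrong: you say ``where $\gamma$ touches $\partial M_1$ the defining vector field is tangent to $\partial M$,'' but the velocity of $\tilde\gamma_y$ lies in $T_1$, which \emph{contains} the normal of $\partial M$, so at such points it is generically transverse to $\partial M$, not tangent. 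The correct mechanism is the one you invoke only for $M_2$: since $\tilde\gamma_y(t)$ lies on the same $T_2$-leaf as $\iota_1(\gamma(t))$ and $T_2$-leaves through boundary points lie in $\partial M$, the lift can reach $\partial M$ only at times when $\gamma(t)\in\partial M_1$, and there it cannot exit because $\gamma$ does not exit $M_1$. Making this precise (existence and continuation of the lift at boundary times, and the homotopy/holonomy argument when the endpoint $x_1\in\partial M_1$, so that every competing curve ends on $\partial M$) requires product charts adapted to $(T_1,T_2)$ \emph{at boundary points}, i.e.\ a Frobenius-with-boundary statement you never establish. This is exactly the difficulty the paper's Lemma \ref{lem-2} is engineered to avoid: it uses product neighborhoods only at interior points and handles the boundary by metric completeness plus the leaf-containment argument, while Theorem \ref{thm-CAH} replaces the chart-by-chart holonomy bookkeeping by an ODE comparison along developments of interior curves.

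Second, your global step ``a complete local isometry of manifolds with boundary carrying $\partial M_1\times M_2$ into $\partial M$ is a covering map'' is not a citable standard fact and is where injectivity and surjectivity actually live; the usual proof via geodesic lifting and normal balls breaks at the boundary (geodesics stop at $\partial M$, and $M$ is only metrically complete), and the length-space substitute you gesture at would itself need a proof, together with the completeness of $M_1$ and $M_2$ that you only promise to check. The paper sidesteps this entirely: using simple connectedness of $M$ it applies Theorem \ref{thm-CAH} a second time to build a local isometry $h:M\to M_1\times M_2$ with $h(p)=(p_1,p_2)$ and $h_{*p}=({\iota_1}_{*p_1}+{\iota_2}_{*p_2})^{-1}$, and then $f\circ h$ and $h\circ f$ are local isometries fixing a point with identity differential, hence the identity, so $f$ is a global isometry without any covering-space theorem for manifolds with boundary. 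Until you either prove the with-boundary covering lemma and the boundary-adapted product charts, or replace that step by a two-sided construction as in the paper, the proposal has real gaps precisely at the boundary issues the theorem is about.
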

We would like to mention that the assumption on the simply connectedness of $M$ can not be removed. For example, let $M=[0,1]\times \R^2/\mathbb{Z}^2$ equipped with the standard product metric, and
\begin{equation}
T_1=\mbox{span}\left\{\pt, \frac{\p}{\p x}+r\frac{\p}{\p y}\right\}
\end{equation}
and
\begin{equation}
T_2=\mbox{span}\left\{-r\frac{\p}{\p x}+\frac{\p}{\p y}\right\}
\end{equation}
with $r$ an irrational number, where $t$ is the natural coordinate on $[0,1]$ and $(x,y)$ is the natural coordinate on $\R^2$. Then, $T_1$ and $T_2$ are parallel distributions on $M$ that are orthogonal complements of each other with $T_1$ containing the normal vectors. However, we can not have a decomposition of $M$ according the distributions  $T_1$ and $T_2$ because $r$ is an irrational number.

Because Wu's proof used geodesics to connect two different points and Maltz's proof relied heavily on convex normal neighborhoods, their proofs will not work for Riemnnian manifolds with boundary without any convexity assumption on the boundary. We will prove the result by combining the idea of Kobayashi-Nomizu \cite[P.187]{KN} using development of curves and the idea of Wu using the Cartan-Ambrose-Hicks theorem.

Let's recall the notion of developments of curves in \cite[P. 130]{KN}.  The original definition in \cite{KN} was given in the language of connections for principle bundles. We will present here an equivalent notion in a more elementary form.
\begin{defn}
Let $(M^n,g)$ be a Riemanian manifold and $v:[0,T]\to T_pM$ be a curve in $T_pM$. A curve $\gamma:[0,T]\to M$ such that
$$\gamma(0)=p\ \mbox{and}\  \gamma'(t)=P_0^t(\gamma)(v(t))\ \mbox{for any}\ t\in [0,T]$$ is called a development of the curve $v$. Here $P_{t_1}^{t_2}(\gamma):T_{\gamma(t_1)}M\to T_{\gamma(t_2)}M$ means the parallel displacement along $\gamma$ from $\gamma(t_1)$ to $\gamma(t_2)$.
\end{defn}
Note that when $v$ is constant, the development of $v$ is just a geodesic, and when $v$ is piece-wise constant, the development of $v$ is just a broken geodesic. It can be shown that when $v$ is smooth, the development of $v$ is unique if exists. When the Riemannian manifold is complete, the development of $v$ exists for any $v$.

It is clear that local isometry of Riemannian manifolds will preserve curvature tensors. It was Cartan \cite{Ca} first gave a converse of this fact in local settings. This result is nowadays called Cartan's lemma. The conclusion was extended to a global setting by Ambrose \cite{Am} under the assumptions of simply connectedness and that curvature tensors are preserved by parallel displacements along broken geodesics. Finally, Hicks \cite{Hi} extended the conclusion to complete affine manifolds. A more general form of the Cartan-Ambrose-Hicks theorem can be found in \cite{BH}. In \cite{Ne}, O'Neil gave an alternative proof of Ambrose's result.

In this paper, to implement the idea of Wu proving de Rham decomposition using the Cartan-Ambrose-Hicks theorem, we need the following version of  Cartan-Ambrose-Hicks theorem.
\begin{thm}\label{thm-CAH}
Let $(M^n,g)$ and $(\tilde M^n,\tilde g)$ be two Rimannian manifolds (not necessary complete and may have boundaries). Let $p$ be an interior point of $M$, $\tilde p\in \tilde M$ and $\varphi:T_pM\to T_{\tilde p}\tilde M$ be a linear isometry. Suppose that $M$ is simply connected and for any smooth interior curve $\gamma:[0,1]\to M$ with $\gamma(0)=p$, the development $\tilde \gamma$ of $\varphi(v_\gamma)$ exists in $\tilde M$. Here $$v_\gamma(t)=P_t^0(\gamma)(\gamma'(t))$$ for $t\in [0,1]$. Moreover, suppose that $$\tau^{*}_\gamma R_{\tilde M}=R_M$$ for any smooth interior curve $\gamma:[0,1]\to M$ with $\gamma(0)=p$ where $$\tau_\gamma=P_0^1(\tilde \gamma)\circ\varphi\circ P_1^0(\gamma): T_{\gamma(1)}M\to T_{\tilde\gamma(1)}\tilde M.$$
Here a curve $\gamma:[0,1]\to M$ is said be an interior curve if $\gamma(t)$ is in the interior of $M$ for any $t\in [0,1)$, and $R_M$ and $R_{\tilde M}$ are the curvature tensors of $M$ and $\tilde M$ respectively. Then, the map $f(\gamma(1))=\tilde\gamma(1)$ from $M$ to $\tilde M$ is well defined and $f$ is the local isometry from $M$ to $\tilde M$ with $f(p)=\tilde p$ and $f_{*p}=\varphi$.
\end{thm}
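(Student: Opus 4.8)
The plan is to reduce to the classical Cartan--Ambrose--Hicks argument, run on the interior of $M$, and then extend the resulting local isometry across $\p M$ by continuity; the hypotheses are phrased in terms of interior curves precisely so that no development ever has to ``cross'' the boundary. The first ingredient is Cartan's lemma at an interior point: for an interior point $x\in M$, a point $y\in\tilde M$, and a linear isometry $\psi:T_xM\to T_y\tilde M$ such that developments of the small constant vectors $\psi(v)$ exist and $\tau^*_{\sigma_v} R_{\tilde M}=R_M$ holds along the radial geodesics $\sigma_v(t)=\exp_x(tv)$ (these are smooth interior curves when $x$ is interior and $v$ is small), one compares the Jacobi equations on $M$ and $\tilde M$ along the $\sigma_v$ to conclude, in the usual way, that $h:=\exp_y\circ\psi\circ\exp_x^{-1}$ is an isometry from a small normal ball about $x$ onto its image, with $h(x)=y$, $h_{*x}=\psi$, and $h(\sigma_v(1))=\tilde\sigma_v(1)$, $h_{*\sigma_v(1)}=\tau_{\sigma_v}$.

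Next, for a fixed smooth interior curve $\gamma:[0,1]\to M$ with $\gamma(0)=p$, I would build a tubular neighborhood $\mathcal T_\gamma$ of $\gamma([0,1))$ inside $\mathrm{int}\,M$ and a local isometry $F_\gamma:\mathcal T_\gamma\to\tilde M$ with $F_\gamma(p)=\tilde p$ and $(F_\gamma)_{*p}=\varphi$, by the standard open--closed continuation in the parameter $t$: at each $t\in[0,1)$ apply the previous paragraph at the interior point $\gamma(t)$ with the linear isometry $\tau_{\gamma|_{[0,t]}}$ --- which is legitimate because the curvature identity is available along the smooth interior curve obtained by following $\gamma|_{[0,t]}$ and then a short radial geodesic at $\gamma(t)$, after smoothing the corner --- and check that overlapping local Cartan maps agree, using uniqueness of developments. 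Since a local isometry intertwines parallel transport, $F_\gamma$ restricted to any interior curve $\delta$ from $p$ inside $\mathcal T_\gamma$ solves the development equation of $\varphi(v_\delta)$, so $F_\gamma\circ\delta=\tilde\delta$; in particular $F_\gamma(\gamma(t))=\tilde\gamma(t)$ for $t<1$ and $F_\gamma(\delta(1))=\tilde\delta(1)$ whenever $\delta(1)\in\mathcal T_\gamma$.

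Then I would prove well-definedness of the assignment $\gamma(1)\mapsto\tilde\gamma(1)$. Given two smooth interior curves $\gamma_0,\gamma_1$ from $p$ to a common endpoint $q$, use simple connectedness of $M$ together with a collar of $\p M$ to produce a smooth homotopy $\{\gamma_s\}_{s\in[0,1]}$ from $\gamma_0$ to $\gamma_1$ that pins the endpoints $p$ and $q$ and consists of smooth interior curves --- concretely, take any homotopy rel endpoints and push the interior of each curve away from $\p M$ by a $t$-dependent amount vanishing to high order at $t=0,1$. By compactness the homotopy decomposes into finitely many short pieces, each remaining in a single tube $\mathcal T_{\gamma_{s_0}}$ from the previous step, and on such a piece $\tilde\gamma_s(1)=F_{\gamma_{s_0}}(q)$ is constant in $s$; chaining these equalities gives $\tilde\gamma_0(1)=\tilde\gamma_1(1)$. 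Hence $f(q):=\tilde\gamma(1)$ is well defined for every $q\in M$ --- interior or on $\p M$ --- reachable from $p$ by a smooth interior curve, i.e.\ (the interior being connected and dense) on all of $M$.

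Finally, on $\mathrm{int}\,M$ the map $f$ coincides near each point with some $F_\gamma$, so $f|_{\mathrm{int}\,M}$ is a local isometry with $f(p)=\tilde p$ and $f_{*p}=\varphi$; taking interior points $\gamma(t)\to q$ along an interior curve $\gamma$ ending at $q\in\p M$ gives $f(\gamma(t))=\tilde\gamma(t)\to\tilde\gamma(1)$, so $f$ is continuous up to $\p M$, and then (working in a half-ball chart in which both metrics are smooth up to the boundary, via elliptic regularity for the equation $f^*\tilde g=g$ up to the boundary --- equivalently, a locally distance-preserving continuous map between Riemannian manifolds with boundary is a smooth local isometry) $f$ is a smooth local isometry on all of $M$, which in particular forces $f(\p M)\subseteq\p\tilde M$. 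This gives the desired $f$. The steps involving only interior points reproduce the classical Cartan--Ambrose--Hicks mechanism, so the genuinely new difficulty --- and the one I expect to be the main obstacle --- is the construction, in the third step, of a homotopy through \emph{smooth interior} curves with both endpoints pinned, which is exactly where the collar structure of $\p M$ enters; the boundary-regularity point in the last step is a secondary issue requiring some care.
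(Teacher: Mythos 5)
Your interior construction follows the classical Cartan--Ambrose--Hicks patching (Cartan's lemma on normal balls, continuation along a curve, homotopy chopped into tubes), which is a genuinely different route from the paper: the paper never uses exponential maps or normal neighborhoods at all, but instead compares the variation fields of a homotopy of developments in $M$ and $\tilde M$ through a generalized Jacobi equation (Lemma \ref{lem-g-Jacobi}); since developments are solutions of ODEs along the prescribed curves and the hypotheses guarantee their existence up to and including a boundary endpoint, well-definedness and the isometry property come out uniformly on all of $M$, with no separate boundary step. Your detour through $\exp$ reintroduces exactly the difficulties that formalism is designed to avoid, and this is where the proposal has a genuine gap: the final extension of $f$ across $\p M$. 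Knowing only that $f$ is a local isometry on $\mathrm{int}\,M$ with a continuous extension to $\p M$, you are not entitled to either of the tools you invoke. The map is not known to be locally distance-preserving near $\p M$ (you get $d_{\tilde M}(f(x),f(y))\le d_M(x,y)$ by pushing paths into the interior, but the reverse inequality requires information about $f$ at boundary points that you do not yet have), so a Myers--Steenrod-type statement does not apply; and ``elliptic regularity for $f^*\tilde g=g$ up to the boundary'' presupposes some a priori regularity of $f$ up to $\p M$ or boundary data, neither of which is available. The natural attempt to bypass this --- representing $f$ near $q\in\p M$ as $\exp_{f(x_0)}\circ f_{*x_0}\circ\exp_{x_0}^{-1}$ for a nearby interior basepoint $x_0$ and concluding by continuity --- fails when $\p M$ is concave near $q$, because the radial geodesics from $x_0$ to points of $M$ near $q$ may leave $M$; no convexity of $\p M$ is assumed, and this is precisely the obstruction the paper points out for normal-neighborhood-based proofs.

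A secondary issue in the interior step: $\tilde M$ is not assumed complete and may have boundary, and nothing prevents the developments $\tilde\gamma$ from meeting $\p\tilde M$. So at $y=\tilde\gamma(t)$ you may not have a normal ball, and even the existence and smoothness of $\exp_y$ on $\psi(B_r(0))$ must be extracted from the hypothesis (existence of developments of $\varphi(v_\delta)$ for smoothly reparametrized concatenations of $\gamma|_{[0,t]}$ with short radial geodesics, plus a local smooth extension of $\tilde g$ across $\p\tilde M$ to get smooth dependence); likewise the curvature identity is only assumed at endpoints of curves issuing from $p$, so it must be propagated along the radial geodesics by applying the hypothesis to truncated concatenations. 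These points are repairable with care, but they are not automatic, whereas the paper's development-based argument uses only the objects the hypotheses directly provide. The homotopy-through-smooth-interior-curves step you flag as the main obstacle is in fact the least of the problems --- the paper asserts it with no more detail than you do, and a collar argument handles it.
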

 Our proof of Theorem \ref{thm-CAH} is similar to the proof of Cartan's lemma using the Jacobi field equation. Because we are considering variations for developments of curves, we need the equations of the variation fields for variations of developments of curves that may be considered as a generalization of the equation for Jacobi fields. Here we require the curve $\gamma$ to be interior in Theorem \ref{thm-CAH} because of a technical reason for its application in proving Theorem \ref{thm-main}. One can see from the proof of Theorem \ref{thm-CAH} that the conclusion of Theorem \ref{thm-CAH} is still true if the assumption that $\gamma$ is interior is removed .

 We would like to mention that  by using the trick developed in this paper, we are able to  obtain a decomposition result in \cite{SY2} when replacing the assumption of simply connectedness of the manifold by simply connectedness of one of the factors. We have also used this trick to extend the fundamental theorem for submanifolds to general ambient spaces in \cite{Yu1}. Note the the product of two manifolds with boundary is not a manifold with boundary. It is a manifold with corners (see \cite{JO}). So, there is an interesting question if one can have a more general de Rham decomposition theorem for Riemannian manifolds with corners. The argument in this paper may help to solve the problem. However, because our argument relies heavily on the smoothness of the boundary (see the proof of Lemma \ref{lem-2}), our proof does work for the case of Riemannian manifolds with corners.

The rest of the paper is organized as follows. In Section 2, we prove the local existence and uniqueness for developments, and prove Theorem \ref{thm-CAH}. In Section 3, we prove Theorem \ref{thm-main}.
\section{Developments of curves and Cartan-Ambrose-Hicks Theorem}
In this section, we first give some preliminaries on developments of curves. For completeness of the paper, we also give a proof of the local existence and uniqueness for development of curves. Then, we prove  theorem \ref{thm-CAH}.

\begin{lem}Let $(M^n,g)$ be a Riemannian manifold (with or without boundary) and $p$ be an interior point of $M$. Let $v:[0,T]\to T_pM$ be a smooth curve in $T_pM$. Then, there is a positive number $\epsilon$ and a unique smooth curve $\gamma:[0,\epsilon]\to M$ such that $\gamma(0)=p$ and
\begin{equation}
\gamma'(t)=P_0^t(\gamma)(v(t))
\end{equation}
for $t\in [0,\epsilon]$.
\end{lem}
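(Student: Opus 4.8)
The plan is to reduce the functional-differential equation defining a development to an ordinary differential equation by carrying along a frame. Since $p$ is an interior point, I would first choose a coordinate chart $(U;x^1,\dots,x^n)$ centered at $p$ whose image is an open ball in $\R^n$, and fix an orthonormal basis $e_1,\dots,e_n$ of $T_pM$. For an unknown curve $\gamma$ in $U$ with $\gamma(0)=p$, set $E_i(t)=P_0^t(\gamma)(e_i)$ and write $v(t)=\sum_i v^i(t)e_i$ with $v^i$ smooth functions of $t$. Then the equation $\gamma'(t)=P_0^t(\gamma)(v(t))$ becomes $\gamma'(t)=\sum_i v^i(t)E_i(t)$, while the defining property of parallel displacement is $\nabla_{\gamma'(t)}E_i(t)=0$.

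Expressed in the chart, with $\gamma(t)\leftrightarrow(x^1(t),\dots,x^n(t))$ and $E_i=E_i^k\,\p_{x^k}$, this pair of conditions is the first-order system
\begin{equation}
\dot x^k=\sum_i v^i(t)E_i^k,\qquad \dot E_i^k=-\Gamma^k_{jl}(x)\,\dot x^j E_i^l,
\end{equation}
where the $\Gamma^k_{jl}$ are the Christoffel symbols of $g$ in the chart. The right-hand side is smooth in the unknowns $(x^k,E_i^k)$ and smooth in $t$ through the $v^i$, so the classical existence-and-uniqueness theorem for ODEs produces a unique solution on some interval $[0,\epsilon]$ with initial data $x^k(0)=0$ and $E_i^k(0)$ the components of $e_i$ in the coordinate basis at $p$; after shrinking $\epsilon$ we may assume the solution stays in $U$. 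Standard bootstrapping gives that the solution, and hence the curve $\gamma$ it defines, is smooth.

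It then remains to check that this $\gamma$ is genuinely a development and that developments are unique. By construction the $E_i(t)$ satisfy the parallel transport equation along $\gamma$, so $E_i(t)=P_0^t(\gamma)(e_i)$; since parallel transport is a linear isometry, $\la E_i(t),E_j(t)\ra\equiv\delta_{ij}$, and therefore $P_0^t(\gamma)(v(t))=\sum_i v^i(t)P_0^t(\gamma)(e_i)=\sum_i v^i(t)E_i(t)=\gamma'(t)$, as required. Conversely, if $\gamma$ is any development defined near $0$, then for small $t$ it lies in $U$, and setting $E_i(t)=P_0^t(\gamma)(e_i)$ produces a solution of the system above with the same initial data; ODE uniqueness forces it to coincide with the constructed solution, so $\gamma$ is unique on $[0,\epsilon]$ after possibly shrinking $\epsilon$.

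The only step I would flag as the actual crux is the passage from the original equation — in which the operator $P_0^t(\gamma)$ depends on the as-yet-unknown curve $\gamma$, so the equation is not an ODE — to the genuine ODE system above. The point is precisely that enlarging the unknown from $\gamma$ to the pair $(\gamma,\text{parallel frame})$ makes the dependence local in $t$. Everything after that is the Picard--Lindel\"of machinery together with the fact that parallel transport preserves $g$; the interior hypothesis on $p$ is used only to guarantee an honest $\R^n$-valued chart so that no boundary constraint interferes with the ODE on $[0,\epsilon]$.
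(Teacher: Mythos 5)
Your proposal is correct and follows essentially the same route as the paper: both reduce the development equation to a first-order ODE system for the curve's coordinates together with the components of a parallel frame (with $\dot x^j$ substituted from the first equation into the frame equation), and then invoke standard existence and uniqueness for Cauchy problems. Your added verification that a solution of the system really is a development, and the converse for uniqueness, is a slightly more explicit rendering of what the paper leaves implicit.
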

\begin{proof} We only need to derive the equation of $\gamma$. The conclusion will follow directly by existence and uniqueness of solution for Cauchy problems of ordinary differential equations.

Let $(x^1,x^2,\cdots,x^n)$ be a local coordinate at $p$ with $x^i(p)=0$ for $i=1,2,\cdots,n$. Suppose that
\begin{equation}
v(t)=v^i(t)\frac{\p}{\p x^i}\bigg|_p.
\end{equation}
Let $\gamma(t)=(x^1(t),x^2(t),\cdots,x^n(t))$ be a development of $v$ and $E_i$ be the parallel extension of  $\frac{\p}{\p x^i}\big|_p$ along $\gamma$. Suppose that
\begin{equation}
E_i(t)=x_i^j(t)\frac{\p}{\p x^j}.
\end{equation}
Then,
\begin{equation}\label{eq-x-ij}
\frac{d x_i^j}{dt}+x_i^k\frac{dx^l}{dt}\Gamma_{kl}^j(x^1,x^2,\cdots,x^n)=0
\end{equation}
for any $i,j=1,2,\cdots,n$, since $\nabla_{\gamma'}E_i=0$.

Moreover, note that
\begin{equation}
P_0^t(\gamma)(v(t))=v^i(t)E_i=v^ix_i^j\frac{\p}{\p x^j}.
\end{equation}
So,
\begin{equation}\label{eq-x-i}
\frac{dx^i}{dt}=v^jx_j^i
\end{equation}
for $i=1,2,\cdots,n$, by that  $\gamma'(t)=P_0^t(\gamma)(v(t))$.

In summary, by substituting \eqref{eq-x-i} into \eqref{eq-x-ij}, we know that the curve $\gamma$ must satisfy the following ODEs:
\begin{equation}\label{eq-dev}
\left\{\begin{array}{ll}\frac{dx^i}{dt}=v^jx_j^i&\mbox{for $i=1,2,\cdots,n$}\\
\frac{d x_i^j}{dt}+v^mx_i^kx_m^l\Gamma_{kl}^j(x^1,x^2,\cdots,x^n)=0&\mbox{for $i,j=1,2,\cdots,n$}\\
x_i^j(0)=\delta_i^j&\mbox{for $i,j=1,2,\cdots,n$}\\
x_i(0)=0&\mbox{for $i=1,2,\cdots,n$.}
\end{array}\right.
\end{equation}
By standard theory of ODE, the equation has a unique solution for a short time. This completes the proof of the lemma.
\end{proof}

By combining the local uniqueness and a standard trick in extending solutions for ODEs, one has the following global existence and uniqueness of development of curves for complete Riemannian manifolds without boundary. One can find the proof in \cite[P. 175]{KN}.
\begin{thm}
Let $(M^n,g)$ be a complete Riemannian manifold without boundary. Then, each smooth curve $v:[0,T]\to T_pM$ has a unique development $\gamma:[0,T]\to M$.
\end{thm}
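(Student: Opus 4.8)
The plan is to run the standard maximal-interval argument from the theory of ordinary differential equations, feeding it the local existence and uniqueness lemma just proved together with metric completeness.

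\emph{Uniqueness.} Suppose $\gamma_1,\gamma_2\colon[0,T]\to M$ are both developments of $v$, and let $s$ be the supremum of those $t\in[0,T]$ with $\gamma_1|_{[0,t]}=\gamma_2|_{[0,t]}$. The local uniqueness statement gives $s>0$, and continuity gives $\gamma_1(s)=\gamma_2(s)=:q$. If $s<T$, observe that parallel displacement depends only on the curve traversed, so $\Phi:=P_0^s(\gamma_1)=P_0^s(\gamma_2)$; hence for $t\ge s$ both $\gamma_1$ and $\gamma_2$ are developments, starting at time $s$ from $q$, of the curve $t\mapsto\Phi(v(t))$ in $T_qM$. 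Since $M$ has no boundary, $q$ is an interior point, and the local uniqueness statement applied with initial time $s$ forces $\gamma_1=\gamma_2$ near $s$, contradicting the choice of $s$. Hence $s=T$ and $\gamma_1\equiv\gamma_2$.

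\emph{Existence.} Let $I=\{T'\in[0,T]:\text{a development of }v|_{[0,T']}\text{ exists}\}$. Restricting developments shows $I$ is a subinterval of $[0,T]$ containing $0$, and the local existence lemma gives $[0,\epsilon]\subseteq I$. Next, $I$ is open to the right in $[0,T]$: if $T'\in I$ with $T'<T$ and $\gamma\colon[0,T']\to M$ is a development of $v|_{[0,T']}$, put $q=\gamma(T')$ and $\Phi=P_0^{T'}(\gamma)$, and apply the local existence lemma with initial time $T'$ at the interior point $q$ to the curve $t\mapsto\Phi(v(t))$ in $T_qM$, obtaining $\tilde\gamma\colon[T',T'+\delta]\to M$ with $\tilde\gamma(T')=q$. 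The velocities of $\gamma$ and $\tilde\gamma$ agree at $T'$, and in a local chart the concatenation $\hat\gamma$ solves the smooth system \eqref{eq-dev} on each piece; one checks directly that $\hat\gamma'(t)=P_0^t(\hat\gamma)(v(t))$ for all $t\in[0,T'+\delta]$, so $\hat\gamma$ is smooth and $T'+\delta\in I$. It remains to show $I$ is closed, which is the only step requiring genuine work and where completeness is used.

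\emph{Closedness.} Suppose $I=[0,a)$ with $a\le T$. By uniqueness, the developments on the intervals $[0,T']$, $T'<a$, are mutually compatible and patch together into a development $\gamma\colon[0,a)\to M$ of $v|_{[0,a)}$. Since parallel displacement is a linear isometry, $|\gamma'(t)|_g=|v(t)|_g$, which is bounded on $[0,a)$ because $v$ is continuous on all of $[0,T]$; hence $\gamma$ is Lipschitz with respect to the Riemannian distance, so $\gamma(t)$ is Cauchy as $t\to a^-$ and, by completeness, converges to some $q\in M$. Working in a coordinate chart around $q$, for $t$ near $a$ the pair consisting of $\gamma$ and its parallel frame $E_i(t)$ (the parallel transport along $\gamma$ of a fixed basis of $T_pM$) solves a first-order system of the form \eqref{eq-dev} with coefficients smooth on the precompact chart; the curve part converges to $q$, and the frame part stays bounded because parallel displacement preserves $g$-norms while $g$ is bounded above and below near $q$. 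Thus the solution remains in a compact subset of its domain as $t\to a^-$, and by the standard extension criterion for ODEs it extends past $a$ (or to $[0,a]$ in case $a=T$), contradicting $I=[0,a)$. Therefore $I=[0,T]$, which is the claim. The main obstacle is precisely this closedness step: one must know that the endpoint limit exists — this is exactly metric completeness together with the a priori speed bound from the isometry property of parallel displacement — and that the governing ODE does not degenerate there, which the norm-preservation of parallel displacement again guarantees.
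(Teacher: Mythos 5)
Your argument is correct and is essentially the proof the paper itself omits, citing Kobayashi--Nomizu: local existence/uniqueness from the ODE system \eqref{eq-dev}, a right-openness gluing step, and the key closedness step where the a priori speed bound $|\gamma'(t)|_g=|v(t)|_g$ (parallel displacement being an isometry) plus metric completeness yields the endpoint limit and lets the ODE extend. No gaps worth flagging; this is the ``standard trick in extending solutions for ODEs'' the paper alludes to, written out in full.
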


We will denote the development of $v$ as $\dev(p,v)$. When $v$ is constant, it is clear that
\begin{equation}
\dev(p,v)(t)=\exp_p(tv).
\end{equation}
Moreover, it is clear that
\begin{equation}\label{eq-dev-group}
\dev(p,v)(t)=\dev(\dev(p,v)(t_0),P_0^{t_0}(\dev(p,v))v_{t_0})(t-t_0)
\end{equation}
for any $t_0\in [0,T]$ and $t\in [t_0,T]$. Here
$$v_{t_0}(t)=v(t_0+t)$$ for $t\in [0,T-t_0]$. For simplicity, we will denote a vector and its parallel displacement by the same symbol when it makes no confusions. Under this convention, the identity \eqref{eq-dev-group} can be simply written as
\begin{equation}\label{eq-dev-group-1}
\dev(p,v)(t)=\dev(\dev(p,v)(t_0),v_{t_0})(t-t_0).
\end{equation}

Next, we come to derive the equation for the variation field of a variation for developments of curves which can be viewed as a generalization of the Jacobi field equation.
\begin{lem}\label{lem-g-Jacobi}
Let $(M^n,g)$ be a Riemannian manifold and $p\in M$. Let $v(u,t):[0,1]\times [0,1]\to T_pM$ be a smooth map and $$\Phi(u,t)=\dev(p,v(u,\cdot))(t).$$
Let $e_1,e_2,\cdots,e_n$ be an orthonormal basis of  $T_pM$ and $E_i(u,t)$ be the parallel translation of $e_i$ along $\Phi(u,\cdot)$ for $i=1,2,\cdots,n$. Suppose that
\begin{equation}
v(u,t)=\sum_{i=1}^nv_i(u,t)e_i
\end{equation}
and
\begin{equation}\label{eq-pu}
\pu:=\frac{\partial \Phi}{\p u}=\sum_{i=1}^n U_iE_i.
\end{equation}
Moreove, suppose that
\begin{equation}\label{eq-pu-E}
\nabla_{\pu}E_i(u,t)=\sum_{j=1}^nX_{ij}E_j.
\end{equation}
Then,
\begin{equation}\label{eq-g-Jacobi}
\begin{split}
\left\{\begin{array}{ll}U''_i=\sum_{j,k,l=1}^nv_kv_lR(E_k,E_i,E_l,E_j)U_j+\p_u\p_tv_i+\sum_{j=1}^n\p_tv_jX_{ji}&i=1,2,\cdots,n\\
X'_{ij}=\sum_{k,l=1}^nv_lR(E_i,E_j,E_l,E_k)U_k&i,j=1,2,\cdots,n\\
X_{ij}(u,0)=0&i,j=1,2,\cdots,n\\
U_i(u,0)=0&i=1,2,\cdots,n\\
U'_i(u,0)=\p_uv_i(u,0)&i=1,2,\cdots,n.
\end{array}\right.
\end{split}
\end{equation}
Here the symbol $'$ means taking derivative with respect to $t$.
\end{lem}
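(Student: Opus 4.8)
The plan is to differentiate the defining relations of a development and repeatedly commute covariant derivatives along the two-parameter map $\Phi$, using nothing beyond the torsion-freeness of $\nabla$, the Ricci identity for second covariant derivatives, and the symmetries of the curvature tensor. Throughout, $\nabla$ denotes the pullback connection along $\Phi:[0,1]\times[0,1]\to M$, and I write $\Phi_t=\frac{\p\Phi}{\p t}$, $\Phi_u=\frac{\p\Phi}{\p u}$; since $\p_u$ and $\p_t$ are coordinate fields on the square, one has the torsion-free symmetry $\nabla_{\p_u}\Phi_t=\nabla_{\p_t}\Phi_u$ and the Ricci identity $\nabla_{\p_t}\nabla_{\p_u}W-\nabla_{\p_u}\nabla_{\p_t}W=R(\Phi_t,\Phi_u)W$ for every vector field $W$ along $\Phi$. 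Two facts will be used repeatedly: (i) $\nabla_{\p_t}E_i=0$ for each $i$, by the choice of the $E_i$; and (ii) since $t\mapsto\Phi(u,t)$ is the development of $v(u,\cdot)$, one has $\Phi_t=P_0^t(\Phi(u,\cdot))(v(u,t))=\sum_i v_iE_i$. Since parallel transport is an isometry, $\{E_i(u,t)\}_{i=1}^n$ is orthonormal for every $(u,t)$; differentiating $\la E_i,E_j\ra=\delta_{ij}$ in $u$ gives $X_{ij}=-X_{ji}$, although this antisymmetry will not actually be needed.

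First I would derive the equation for $X_{ij}$. Applying $\nabla_{\p_t}$ to \eqref{eq-pu-E} and using (i) gives $\nabla_{\p_t}\nabla_{\p_u}E_i=\sum_jX'_{ij}E_j$. On the other hand, by the Ricci identity together with $\nabla_{\p_u}\nabla_{\p_t}E_i=0$ (again by (i)), and then (ii) and \eqref{eq-pu}, one gets $\nabla_{\p_t}\nabla_{\p_u}E_i=R(\Phi_t,\Phi_u)E_i=\sum_{k,l}v_lU_k\,R(E_l,E_k)E_i$. Comparing $E_j$-components and invoking the pair symmetry $R(A,B,C,D)=R(C,D,A,B)$ of the curvature tensor yields the second line of \eqref{eq-g-Jacobi}. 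For the initial condition, note $\Phi(u,0)\equiv p$, so $\Phi_u(u,0)=0$ and, since the $u$-curve at $t=0$ and the frame $E_i(\cdot,0)=e_i$ are both constant, $\nabla_{\p_u}E_i|_{t=0}=0$; hence $X_{ij}(u,0)=0$, and likewise $U_i(u,0)=0$.

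Next I would treat the $U_i$'s. Expanding the symmetry identity $\nabla_{\p_u}\Phi_t=\nabla_{\p_t}\Phi_u$: the left-hand side is $\nabla_{\p_u}\sum_i v_iE_i=\sum_i(\p_uv_i)E_i+\sum_{i,j}v_iX_{ij}E_j$ by (ii) and \eqref{eq-pu-E}, while the right-hand side is $\nabla_{\p_t}\sum_i U_iE_i=\sum_iU'_iE_i$ by \eqref{eq-pu} and (i). Comparing $E_i$-components gives the first-order relation
\begin{equation}\label{eq-U-first-order}
U'_i=\p_uv_i+\sum_j v_jX_{ji},
\end{equation}
and putting $t=0$ (using $X_{ji}(u,0)=0$) gives the last initial condition $U'_i(u,0)=\p_uv_i(u,0)$. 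Differentiating \eqref{eq-U-first-order} in $t$ and substituting the equation for $X'_{ji}$ just obtained,
$$U''_i=\p_u\p_tv_i+\sum_j(\p_tv_j)X_{ji}+\sum_j v_jX'_{ji};$$
after relabelling the summation indices the last term becomes $\sum_{j,k,l}v_kv_l\,R(E_k,E_i,E_l,E_j)U_j$, which is exactly the first line of \eqref{eq-g-Jacobi}. The whole computation is purely local and never touches $\p M$, so the presence of the boundary is irrelevant here. I expect the only genuine work to be bookkeeping: pinning down once and for all the order of the arguments in $R(\Phi_t,\Phi_u)E_i$ and the use of the pair symmetry $R(A,B,C,D)=R(C,D,A,B)$ so that the indices land precisely as in \eqref{eq-g-Jacobi}, and the minor point that at $t=0$ the map $\Phi$ is constant in $u$, which is what forces the vanishing initial conditions for the $U_i$ and $X_{ij}$.
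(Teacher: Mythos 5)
Your proposal is correct and follows essentially the same route as the paper: everything is expressed in the parallel frame $E_i$, and the coupled system for $(U_i,X_{ij})$ together with the initial conditions is obtained from torsion-freeness and the commutation (Ricci) identity, with your derivation of the $X'_{ij}$ equation and of $X_{ij}(u,0)=U_i(u,0)=0$ identical to the paper's. The only (harmless) difference is in the first line: you get $U''_i$ by differentiating the first-order relation $U'_i=\p_uv_i+\sum_j v_jX_{ji}$ and substituting the $X'$-equation, whereas the paper computes $\nabla_{\pt}\nabla_{\pt}\pu=\nabla_{\pu}\nabla_{\pt}\pt+R\lf(\pt,\pu\ri)\pt$ directly; after relabelling indices the two coincide, and your route even yields $U'_i(u,0)=\p_uv_i(u,0)$ without the limit argument the paper uses.
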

\begin{proof}Note that
\begin{equation}\label{eq-pt}
\pt:=\frac{\p\Phi}{\p t}=\sum_{i=1}^nv_iE_i
\end{equation}
and
\begin{equation}
\nabla_\pt\pt=\sum_{i=1}^n\p_tv_i(u,t)E_i(u,t).
\end{equation}
Then,
\begin{equation}\label{eq-pu-pt-pt}
\begin{split}
\nabla_{\pu}\nabla_\pt\pt=&\sum_{i=1}^n\p_u\p_tv_i(u,t)E_i(u,t)+\sum_{i=1}^n\p_tv_i(u,t)\nabla_{\pu}E_i(u,t)\\
=&\sum_{i=1}^n\left(\p_u\p_tv_i(u,t)+\sum_{j=1}^n\p_tv_jX_{ji}\right)E_i(u,t)
\end{split}
\end{equation}
So,
\begin{equation}
\begin{split}
\sum_{i=1}^nU_i''E_i=&\nabla_{\pt}\nabla_{\pt}\pu\\
=&\nabla_{\pt}\nabla_{\pu}{\pt}\\
=&\nabla_{\pu}\nabla_{\pt}{\pt}+R\lf(\pt,\pu\ri)\pt\\
=&\sum_{i=1}^n\left(\sum_{j,k,l=1}^nv_kv_lR(E_k,E_i,E_l,E_j)U_j+\p_u\p_tv_i+\sum_{j=1}^n\p_tv_jX_{ji}\right)E_i.
\end{split}
\end{equation}
This gives us the first equation of \eqref{eq-g-Jacobi}.
Moreover,
\begin{equation}\label{eq-pt-pu-E}
\begin{split}
\sum_{j=1}^nX'_{ij}E_j=&\nabla_{\pt}\nabla_{\pu}E_i(u,t)\\
=&\nabla_{\pu}\nabla_{\pt}E_i(u,t)+R\lf(\pt,\pu\ri)E_i\\
=&\sum_{j=1}^n\left(\sum_{k,l=1}^nv_lR(E_i,E_j,E_l,E_k)U_k\right)E_j.
\end{split}
\end{equation}
This gives us the second equation in \eqref{eq-g-Jacobi}. Finally, note that $$\pu\bigg|_{t=0}=0,\ \nabla_{\pu}E_i\big|_{t=0}=0$$
and
\begin{equation}
\nabla_{\pt} \pu\bigg|_{t=0}=\lim_{t\to 0^+}\nabla_{\pu}\pt=\sum_{i=1}^n\lim_{t\to 0^+}\nabla_{\pu}(v_iE_i)=\sum_{i=1}^n\p_uv_i(u,0)e_i.
\end{equation}
So, $X_{ij}(u,0)=0$, $U_i(u,0)=0$ and $U'_i(u,0)=\p_uv_i(u,0)$ for $i,j=1,2,\cdots,n$. This completes the proof of the lemma.
\end{proof}

We are now ready to prove Theorem \ref{thm-CAH}.
\begin{proof}[Proof of Theorem \ref{thm-CAH}] For $x\in M$, let $\gamma_0,\gamma_1:[0,1]\to M$ be two interior smooth curves joining $p$ to $x$. Since $M$ is simply connected, there is a smooth map $\Phi:[0,1]\times [0,1]\to M$ such that
\begin{equation}
\left\{\begin{array}{ll}\Phi(0,t)=\gamma_0(t)&\mbox{for $t\in [0,1]$}\\
\Phi(1,t)=\gamma_1(t)&\mbox{for $t\in [0,1]$}\\
\Phi(u,0)=p&\mbox{for $u\in [0,1]$}\\
\Phi(u,1)=x&\mbox{for $u\in [0,1]$},
\end{array}\right.
\end{equation}
and $\gamma_u(t)=\Phi(u,t)$ is an interior curve for any $u\in [0,1]$. Let
\begin{equation}
v(u,t)=P_t^0(\gamma_u)(\gamma'_u(t)).
\end{equation}
Then $\gamma_u$ is the development of $v(u,\cdot)$. Let $e_1,e_2,\cdots,e_n$ be an orthonormal basis of $T_pM$ and $E_i(u,t)$ be the parallel extension of $e_i$ along $\gamma_u$. Suppose that
$$v(u,t)=\sum_{i=1}^nv_i(u,t)e_i.$$
and
\begin{equation}
\frac{\p \Phi}{\p u}=\sum_{i=1}^n U_iE_i.
\end{equation}
Let $\tilde e_i=\varphi(e_i)$ for $i=1,2,\cdots, n$. Then
\begin{equation}
\varphi(v(u,t))=\sum_{i=1}^nv_i(u,t)\tilde e_i.
\end{equation}
Let $\tilde \Phi(u,t)=\dev(\tilde p, \varphi(v(u,\cdot)))(t)$ and $\tilde E_i$ be the parallel extension of $\tilde e_i$ along $\tilde\Phi(u,\cdot)$. Suppose that
\begin{equation}
\frac{\p \tilde \Phi}{\p u}=\sum_{i=1}^n\tilde U_i\tilde E_i.
\end{equation}
Note that $$R_M(E_i,E_j,E_k,E_l)=R_{\tilde M}(\tilde E_i,\tilde E_j,\tilde E_k,\tilde E_l)$$ by assumption. So, by Lemma \ref{lem-g-Jacobi}, $U_i$'s and $\tilde U_i$'s satisfy the same Cauchy problem of ODEs. By uniqueness of solution for Cauchy problems, we know that
\begin{equation}
\tilde U_i=U_i
\end{equation}
for $i=1,2,\cdots,n$. In particular, $\tilde U_i(u,1)=U_i(u,1)=0$ for $i=1,2,\cdots,n$. So $\tilde\Phi(0,1)=\tilde\Phi(1,1)$. This implies that $f$ is well defined.

Moreover, note that $f_{*}(\frac{\p\Phi}{\p u})=\frac{\p\tilde\Phi}{\p u}$ since $f(\Phi(u,t))=\tilde\Phi(u,t)$. So
\begin{equation}
\left\|f_{*}\lf(\frac{\p\Phi}{\p u}\ri)\right\|=\left\|\frac{\p\tilde\Phi}{\p u}\right\|=\sqrt{\sum_{i=1}^n\tilde U_i^2}=\sqrt{\sum_{i=1}^nU_i^2}=\left\|\frac{\p\Phi}{\p u}\right\|.
\end{equation}
This means that $f$ is a local isometry. It is not hard to see that $f(p)=\tilde p$ and $f_{*p}=\varphi$. This completes the proof of the theorem.

\end{proof}

\section{de Rham decomposition}
In this section, we come to prove Theorem \ref{thm-main}. First, we have the following simple conclusion for products of Riemannian manifolds.
\begin{lem}\label{lem-1}
Let $(M_1,g_1)$ and $(M_2,g_2)$ be two Riemannian manifolds and $M=M_1\times M_2$ be the product Riemannian manifold. Let $p=(p_1,p_2)\in M$ and  $v_i:[0,T]\to T_{p_i}M_i$ for $i=1,2$. Suppose the developments of $v_1$ and $v_2$ exists. Then,\\
(1) for any $t_i\in [0,T]$ with $i=1,2$,
\begin{equation*}
\begin{split}
\dev(\dev(p,v_1)(t_1),v_2)(t_2)=&(\dev(p_1,v_1)(t_1),\dev(p_2,v_2)(t_2))\\
=&\dev(\dev(p,v_2)(t_2),v_1)(t_1),
\end{split}
\end{equation*}
and the parallel displacement along the closed curve:
    $$\dev(p,v_2)|_{t_2}^0\cdot\dev(\dev(p,v_2)(t_2),v_1)|_{t_1}^0\cdot\dev(\dev(p,v_1)(t_1),v_2)|_0^{t_2}\cdot\dev(p,v_1)|_0^{t_1}$$
is the identity map of $T_pM$;\\
(2) for any $t\in [0,T]$,
\begin{equation*}
\begin{split}
\dev\left(\dev(p,v_1)(t),v_2\right)(t)=\dev(p,v)(t)=\dev(\dev(p,v_2)(t),v_1)(t)\\
\end{split}
\end{equation*}
and the parallel displacement along the closed curves
    $$\dev(p,v)|^0_t\cdot\dev(\dev(p,v_1)(t),v_2)|_0^t\cdot\dev(p,v_1)|_0^t$$
and $$\dev(p,v)|^0_t\cdot\dev(\dev(p,v_2)(t),v_1)|_0^t\cdot\dev(p,v_2)|_0^t$$
are the identity map of $T_pM$, where $v=(v_1,v_2)\in T_pM$.
\end{lem}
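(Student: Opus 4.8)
The plan is to reduce every assertion to the single observation that all the operations in sight --- the connection, parallel displacement, and hence developments --- split along the product $M=M_1\times M_2$. First I would record the standard fact that the Levi-Civita connection of a product metric is the product connection: under the canonical identification $T_{(q_1,q_2)}M=T_{q_1}M_1\oplus T_{q_2}M_2$, a vector field $(Y_1,Y_2)$ along a curve $\gamma=(\gamma_1,\gamma_2)$ is parallel if and only if each $Y_i$ is parallel along $\gamma_i$. Consequently the parallel displacement along $\gamma$ equals $P(\gamma_1)\oplus P(\gamma_2)$; in particular, if the $M_2$-component of $\gamma$ is constant then $P(\gamma)$ restricts to the identity on the $T_{p_2}M_2$ summand, and symmetrically. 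This is exactly what makes the notation of the statement meaningful: the curve $v_2$ in $T_{p_2}M_2$, transported to the tangent spaces along $\dev(p,v_1)$ (whose $M_2$-component is constant), is nothing but $t\mapsto(0,v_2(t))$, and likewise for $v_1$.

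Next I would prove that developments split: for $v=(v_1,v_2):[0,T]\to T_pM$ one has $\dev(p,v)(t)=(\dev(p_1,v_1)(t),\dev(p_2,v_2)(t))$ whenever the right-hand side is defined. This is immediate from the defining ODE of a development, the splitting of parallel displacement, and uniqueness of developments: the curve $\gamma(t)=(\dev(p_1,v_1)(t),\dev(p_2,v_2)(t))$ has $\gamma(0)=p$ and $\gamma'(t)=(P_0^t(\dev(p_1,v_1))v_1(t),P_0^t(\dev(p_2,v_2))v_2(t))=P_0^t(\gamma)(v(t))$, hence it is the development of $v$; moreover all the developments occurring in the lemma are products of $\dev(p_1,v_1)$, $\dev(p_2,v_2)$ and constant curves (recall $\dev(q,0)(t)=\exp_q(0)=q$), so they exist on the claimed intervals. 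Applying the splitting with one component replaced by $0$ gives $\dev(p,v_1)(t)=(\dev(p_1,v_1)(t),p_2)$, and then developing $(0,v_2)$ from that point gives $\dev(\dev(p,v_1)(t_1),v_2)(t_2)=(\dev(p_1,v_1)(t_1),\dev(p_2,v_2)(t_2))$; the symmetric computation with the roles of $1$ and $2$ interchanged, together with specializing $t_1=t_2=t$, yields all the identities in (1) and (2) that do not mention parallel displacement.

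For the assertions about parallel displacement along the closed curves I would parse each loop one segment at a time. Each segment is a development one of whose components is constant, so by the splitting its parallel displacement has the form $A\oplus\id$ or $\id\oplus B$, where $A$ is the displacement along $\dev(p_1,v_1)$ and $B$ that along $\dev(p_2,v_2)$ over the relevant parameter interval, read with the appropriate orientation. Going once around the loop, the $M_1$-factor contributes $A$ exactly once and $A^{-1}$ exactly once (identities otherwise), and the $M_2$-factor contributes $B$ once and $B^{-1}$ once, so the composite telescopes to $\id\oplus\id=\id$ on $T_pM$; the same bookkeeping disposes of both loops in (2). I do not anticipate a genuine obstacle: the mathematical content is entirely the product-splitting of the connection, and the only point requiring care is matching the statement's convention of denoting a vector and its parallel translate by the same symbol with the fact that, for these particular loops, the translations in question act trivially on the factor that matters --- and then keeping the orientations of the four (respectively three) segments straight.
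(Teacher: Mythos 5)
Your proposal is correct, and it is exactly the elaboration the paper has in mind: the paper omits the proof of this lemma entirely, stating only that the conclusion "is clearly true from the product structure." Your write-up --- the splitting of the Levi-Civita connection, hence of parallel transport and of developments (via the uniqueness of solutions of the defining ODE), the observation that transports along curves with one constant factor act trivially on the other factor (which is what makes the statement's notational convention work), and the telescoping $A^{-1}A\oplus B^{-1}B=\mathrm{id}$ for the loops --- supplies precisely the details the paper leaves out, with no gaps.
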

The conclusion of the lemma is clearly true from the product structure. For simplicity, we will not give the details of the proof here. Next, we come to show that similar conclusions with that of Lemma \ref{lem-1} hold on Riemannian manifolds with two nontrivial parallel distributions that are orthogonal complements of each other.
\begin{lem}\label{lem-2}
Let $(M^n,g)$ be a complete Riemmanian manifold with boundary, and $T_1$ and $T_2$ be two nontrivial parallel distributions on $M$ that are orthogonal complements of each other with $T_1$ containing the normal vectors of $\p M$ when $\p M\neq\emptyset$. Let $p$ be an interior point of $M$ and $v^i(t):[0,1]\to T_i(p)$ be a smooth curve for $i=1,2$. Let $v=v^1+v^2$. Then,\\
(1) the development of $v^2$ exists and stays in the interior of $M$;\\
(2) if $\dev(p,v^1)$ exists and is an interior curve, then so is $\dev(p,v)$ and
\begin{equation}\label{eq-parallelogram}
\begin{split}
\dev(\dev(p,v^1)(t),v^2)(t)=\dev(p,v)(t)=\dev(\dev(p,v^2)(t),v^1)(t)\\
\end{split}
\end{equation}
for any $t\in [0,1]$. Moreover the parallel displacements along the closed curves $$\dev(p,v)|^0_t\cdot\dev(\dev(p,v^1)(t),v^2)|_0^t\cdot\dev(p,v^1)|_0^t$$
and $$\dev(p,v)|^0_t\cdot\dev(\dev(p,v^2)(t),v_1)|_0^t\cdot\dev(p,v^2)|_0^t$$
are the identity map, for any $t\in [0,1]$.\\
(3) if $\dev(p,v)$ exists and is an interior curve, then so is $\dev(p,v^1)$.
\end{lem}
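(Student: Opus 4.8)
The plan rests on two structural facts about parallel distributions, neither of which uses simple connectedness. First, since $T_2$ is parallel we have $\nabla_X Y\in T_2$ for sections $X,Y$ of $T_2$, so every leaf of $T_2$ is totally geodesic in $M$; as $M$ is complete its geodesics run for all time, and a geodesic whose initial velocity lies in $T_2$ keeps its velocity in $T_2$ (parallelism), hence stays in one leaf — so the leaves of $T_2$ are themselves complete, and likewise those of $T_1$. Second, a Fermi collar $\p M\times[0,\delta)\to M$, $(x,r)\mapsto\exp_x(r\nu(x))$ along the inward unit normal $\nu$ yields the boundary-avoidance: since $\nu(x)\in T_1(x)$ by hypothesis and the normal geodesic $r\mapsto\exp_x(r\nu(x))$ has parallel velocity $\p_r$, parallelism of $T_1$ gives $\p_r\in T_1$, hence $T_2\perp\p_r$, throughout the collar; thus $T_2$ is tangent to the level sets of the boundary distance, a leaf of $T_2$ meeting $\p M$ lies locally in $\p M$, and (by connectedness of the leaf and closedness of $\p M$) every leaf of $T_2$ is either contained in $\p M$ or contained in $\mathrm{int}\,M$. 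Part (1) is now immediate: $\dev(p,v^2)$ has velocity in $T_2$, so it lies in the leaf $L_2(p)$; since $p\in\mathrm{int}\,M$ this leaf lies in $\mathrm{int}\,M$ and is a complete boundaryless manifold, so the development exists on all of $[0,1]$ and stays interior.

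For (2), I would prove the parallelogram identity and the two closed-loop statements first on a short interval $[0,\epsilon]$, on which $\dev(p,v^1)$, $\dev(p,v^2)$, $\dev(p,v)$, the iterated developments, and the loop edges all remain in a single neighborhood $U$ of the interior point $p$ that, by the \emph{local} de Rham splitting, is isometric to a product $U_1\times U_2$ carrying the two factor distributions; on $U$ the conclusions are precisely Lemma \ref{lem-1}(2). To globalize, let $t_0$ be the supremum of times up to which (2) holds. If $t_0<1$, continuity of the finitely many uniformly bounded-speed curves involved gives (2) on $[0,t_0]$; in particular $\dev(p,v^1)(t_0)$ is interior. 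The semigroup property \eqref{eq-dev-group-1} of developments at time $t_0$ then reduces the extension of (2) across $t_0$ to the short-interval statement based at the interior point $\dev(p,v^1)(t_0)$ — which holds by the local product argument there (leaf completeness guarantees the $T_2$-development from that point exists), the loop statements being handled the same way — contradicting maximality. Hence (2) holds on $[0,1)$, and then on $[0,1]$ by continuity: each curve in the three expressions extends continuously to $t=1$ (bounded speed and completeness), and $\dev(\dev(p,v^1)(1),v^2)$ is well defined because $\dev(p,v^1)(1)$ sits in some leaf of $T_2$, which is complete. Finally, interiority of $\dev(p,v)$ on $[0,1)$ follows from $\dev(p,v)(t)=\dev(\dev(p,v^1)(t),v^2)(t)$, a $T_2$-development starting at the interior point $\dev(p,v^1)(t)$ for $t<1$, hence lying in an interior leaf.

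For (3), assume $\dev(p,v)$ exists and is interior on $[0,1)$ and let $a\in(0,1]$ be maximal so that $\dev(p,v^1)$ exists and is interior on $[0,a)$. On each $[0,a']$ with $a'<a$ the hypothesis of (2) holds, so $\dev(p,v)|_{[0,a')}=\dev(\dev(p,v^1)(\cdot),v^2)(\cdot)$ and is interior. Having bounded speed $|v^1(t)|$, $\dev(p,v^1)$ has finite length on $[0,a)$ and converges (completeness) to some $q_1\in M$ as $t\to a$. If $q_1$ is interior, a local product chart at $q_1$ — in which $\dev(p,v^1)$ is a curve in the boundaryless factor — extends $\dev(p,v^1)$ past $a$ while keeping it interior, so maximality forces $a=1$, the desired conclusion. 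If $q_1\in\p M$, then by leaf completeness $\dev(q_1,v^2)$ exists on $[0,a]$ and, lying in $L_2(q_1)\subseteq\p M$, takes its value at $a$ in $\p M$; but continuous dependence on initial data gives $\dev(p,v)(t)=\dev(\dev(p,v^1)(t),v^2)(t)\to\dev(q_1,v^2)(a)$ as $t\to a$, forcing $\dev(p,v)(a)\in\p M$ and hence $a=1$ (since $\dev(p,v)$ is interior on $[0,1)$). In either case $a=1$, which is (3).

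The step I expect to be the main obstacle is the interaction of developments with the boundary, concentrated in the second structural fact: using the Fermi collar and parallelism of $T_1$ to force $T_2$ tangent to the boundary-distance level sets and thereby split the leaves of $T_2$ into interior ones and boundary ones — this is exactly where smoothness of $\p M$ is indispensable — together with the care needed at the terminal time $t=1$, where $\dev(p,v^1)$ may legitimately reach $\p M$ and one must still make sense of the developments issuing from that point. The remainder is bookkeeping with the local de Rham splitting, the semigroup property of developments, and Lemma \ref{lem-1}.
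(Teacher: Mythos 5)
Your overall strategy matches the paper's (local de Rham product neighborhoods plus Lemma \ref{lem-1}, a maximality/continuity globalization, and the key boundary fact that a leaf of $T_2$ through a boundary point lies in $\p M$, which you justify cleanly with the Fermi collar). But the globalization step in (2) has a genuine gap. To push the identity $\dev(p,v)(t)=\dev(\dev(p,v^1)(t),v^2)(t)$ from $t_0=\sup J$ to a slightly larger $t_1$, the semigroup property \eqref{eq-dev-group-1} and the inductive hypothesis give you
$\dev(p,v)(t_1)=\dev(\dev(\dev(p,v^1)(t_0),v^2)(t_0),v_{t_0})(t_1-t_0)$, and a single product neighborhood of $\dev(p,v)(t_0)$ lets you split the last short development into its $T_1$ and $T_2$ parts; but the target $\dev(\dev(p,v^1)(t_1),v^2)(t_1)$ involves developing $v^2$ over the \emph{entire} interval $[0,t_1]$ from the new basepoint $\dev(p,v^1)(t_1)$. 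Matching the two requires commuting the short $T_1$-development $v^1_{t_0}$ past the long, time-$t_0$ $T_2$-development emanating from $\dev(p,v^1)(t_0)$ --- a statement about points far outside any single neighborhood of $\dev(p,v^1)(t_0)$ or $\dev(p,v)(t_0)$, so it is not ``the short-interval statement based at $\dev(p,v^1)(t_0)$.'' This sliding step is exactly where the paper does its real work: by compactness it chooses a uniform $\epsilon$ so that every ball $B_{\dev(\dev(p,v^1)(t_0),v^2)(t)}(\epsilon)$, $t\in[0,t_0]$, lies in a product neighborhood, subdivides $[0,t_0]$ into $N$ pieces, and proves \eqref{eq-dev-3} by induction over the resulting ladder of small rectangles (Figure \ref{picture}); the same chaining is needed for the two loop/parallel-transport assertions, which you also defer to ``the same way.'' Without this argument your maximality contradiction does not close, and this is the heart of the lemma, not bookkeeping.

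A secondary point: your first ``structural fact'' is stated incorrectly for manifolds with boundary. Completeness of $M$ does not make all geodesics run for all time (they may terminate at $\p M$), and leaves of $T_1$ are in general incomplete manifolds with boundary since $T_1$ contains the normal directions, so ``and likewise those of $T_1$'' is false (though you never use it). Completeness of the $T_2$-leaves you do use is salvageable, but only via the collar fact you state afterwards: an interior $T_2$-geodesic cannot limit to $\p M$ because the boundary-distance function is constant along $T_2$-curves in the collar; and for your step in (3) you additionally need completeness of leaves lying inside $\p M$ (via $\p M$ complete and $T_2|_{\p M}$ parallel for the induced connection), which deserves a line of justification. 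The paper sidesteps leaf completeness altogether by running the maximal-interval argument for the development in $M$ and deriving a contradiction when it meets $\p M$, which is the tighter route.
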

\begin{proof}
 (1) Let $I$ be the maximal interval that $\dev(p,v^2)$ exists. By completeness of $M$, it is clear that $I$ is closed and $\dev(p,v^2)(b)\in \p M$ with $I=[0,b]$ when $b<1$. This implies that $p$ is contained in the leaf of the foliation $T_2$ passing through $\dev(p,v^2)(b)$. However, because $T_2$ is orthogonal to normal vectors of $\p M$, we know that the leaf of $T_2$ passing through $\dev(p,v^2)(b)\in \p M$ must be contained in $\p M$. This contradicts that $p$ is an interior point. For the same reason, $\dev(p,v^2)(t)\in M\setminus \p M$ for any $t\in [0,1]$.\\
(2) Let $b>0$ be such that the development $\dev(p,v)$ of $v$ exists on $[0,b]$, and  $\dev(p,v)(t)$ is in the interior of $M$ for $t\in [0,b)$.  We first show:\\
\textbf{Claim 1.} The statement (2) is true for $t\in [0,b]$.\\
\emph{Proof of Claim 1.} Note that for any interior point $x\in M$, there is an open neighborhood $U$ of $x$ in $M$, such that $U=U_1\times U_2$ and each copy of $U_i$ is an integral submanifold of $T_i$ for $i=1,2$, we call $U$ a product neighborhood of $p$. Let $B_p(\delta)$ be contained in some product neighborhood. Then $\dev(p,v)(t)\in B_p(\delta)$
is contained in some product neighborhood for any $t<\frac\delta A$ where $A=\max_{t\in[0,1]}\|v(t)\|$. By Lemma \ref{lem-1}, the statement (2) is true for $t<\frac\delta A$.

Let $J=\{t\in [0,b]\ |\ \mbox{the statement (2) is  true up to }t.\}$  and let $t_0=\sup J$. By continuity, it is clear that $t_0\in J$. Suppose $t_0<b$.  By compactness, there is an $\epsilon>0$ such that for any $t\in [0,t_0]$, $B_{\dev(\dev(p,v^{1})(t_0),v^2)(t)}(\epsilon)$ is contained in some product neighborhood. Let $t_1\in [0,b]$ with $0<t_1-t_0<\frac\epsilon A$. We want to show that $t_1\in J$. This will be a contradiction. Then we are done in proving Claim 1.

Let $N$ be a natural number such that $\frac{t_0}N<\frac{\epsilon}{2A}$ and let $\xi_i=\frac{i t_0}N$ for $i=0,1,\cdots,N$. Note that $\dev(\dev(p,v)(t_0),v_{t_0})(t)$ for $t\in [0,t_1-t_0]$ is contained in $B_{\dev(p,v)(t_0)}(\epsilon)$ which is contained in  a product neighborhood. By Lemma \ref{lem-1} and \eqref{eq-dev-group}, we know that
\begin{equation}\label{eq-dev-1}
\begin{split}
&\dev(p,v)(t_1)\\
=&\dev(\dev(p,v)(t_0),v_{t_0})(t_1-t_0)\\
=&\dev(\dev(\dev(p,v)(t_0),v^{1}_{t_0})(t_1-t_0),v^{2}_{t_0})(t_1-t_0)\\
=&\dev(\dev(\dev(\dev(p,v^{1})(t_0),v^{2})(t_0),v^{1}_{t_0})(t_1-t_0),v^{2}_{t_0})(t_1-t_0).\\
\end{split}
\end{equation}
The last equality is by that $t_0\in J$.
We claim that
\begin{equation}\label{eq-dev-2}
\begin{split}
&\dev(\dev(\dev(p,v^{1})(t_0),v^{2})(t_0),v^{1}_{t_0})(t_1-t_0)\\
=&\dev(\dev(p,v^{1})(t_1),v^{2})(t_0).
\end{split}
\end{equation}
In fact, we will show that
\begin{equation}\label{eq-dev-3}
\begin{split}
&\dev(\dev(\dev(p,v^{1})(t_0),v^{2})(\xi_i),v^{1}_{t_0})(t_1-t_0)\\
=&\dev(\dev(p,v^{1})(t_1),v^{2})(\xi_i)
\end{split}
\end{equation}
for $i=0,1,\cdots, N$ inductively. The equality \eqref{eq-dev-2} is just \eqref{eq-dev-3} with $i=N$.

First, \eqref{eq-dev-3} is clearly true for $i=0$ by \eqref{eq-dev-group}. Suppose that \eqref{eq-dev-3} is true for some $i$ less than $N$. Note that
\begin{equation}
\dev(\dev(\dev(p,v^1)(t_0),v^2)(\xi_i),v_{t_0}^1)(t)\in B_{\dev(\dev(p,v^1)(t_0),v^2)(\xi_i)}(\epsilon)
\end{equation}
and
\begin{equation}
\dev(\dev(\dev(p,v^1)(t_0),v^2)(\xi_i),v_{\xi_i}^2)(t)\in B_{\dev(\dev(p,v^1)(t_0),v^2)(\xi_i)}(\epsilon)
\end{equation}
for $t\in[0,t_1-t_0]$ and $t\in [0,t_0/N]$ respectively. By Lemma \ref{lem-1}, we know that
\begin{equation*}
\begin{split}
&\dev(\dev(\dev(\dev(p,v^1)(t_0),v^2)(\xi_i),v_{t_0}^1)(t_1-t_0),v_{\xi_i}^2)(\xi_{i+1}-\xi_i)\\
=&\dev(\dev(\dev(\dev(p,v^1)(t_0),v^2)(\xi_i),v_{\xi_i}^2)(\xi_{i+1}-\xi_i),v^1_{t_0})(t_1-t_0)
\end{split}
\end{equation*}
By this and that \eqref{eq-dev-3} is true for $i$, we know that \eqref{eq-dev-3} is true for $i+1$. This proves \eqref{eq-dev-2} (See Figure \ref{picture} for help of understanding).

\begin{figure}
  \includegraphics[width=8cm]{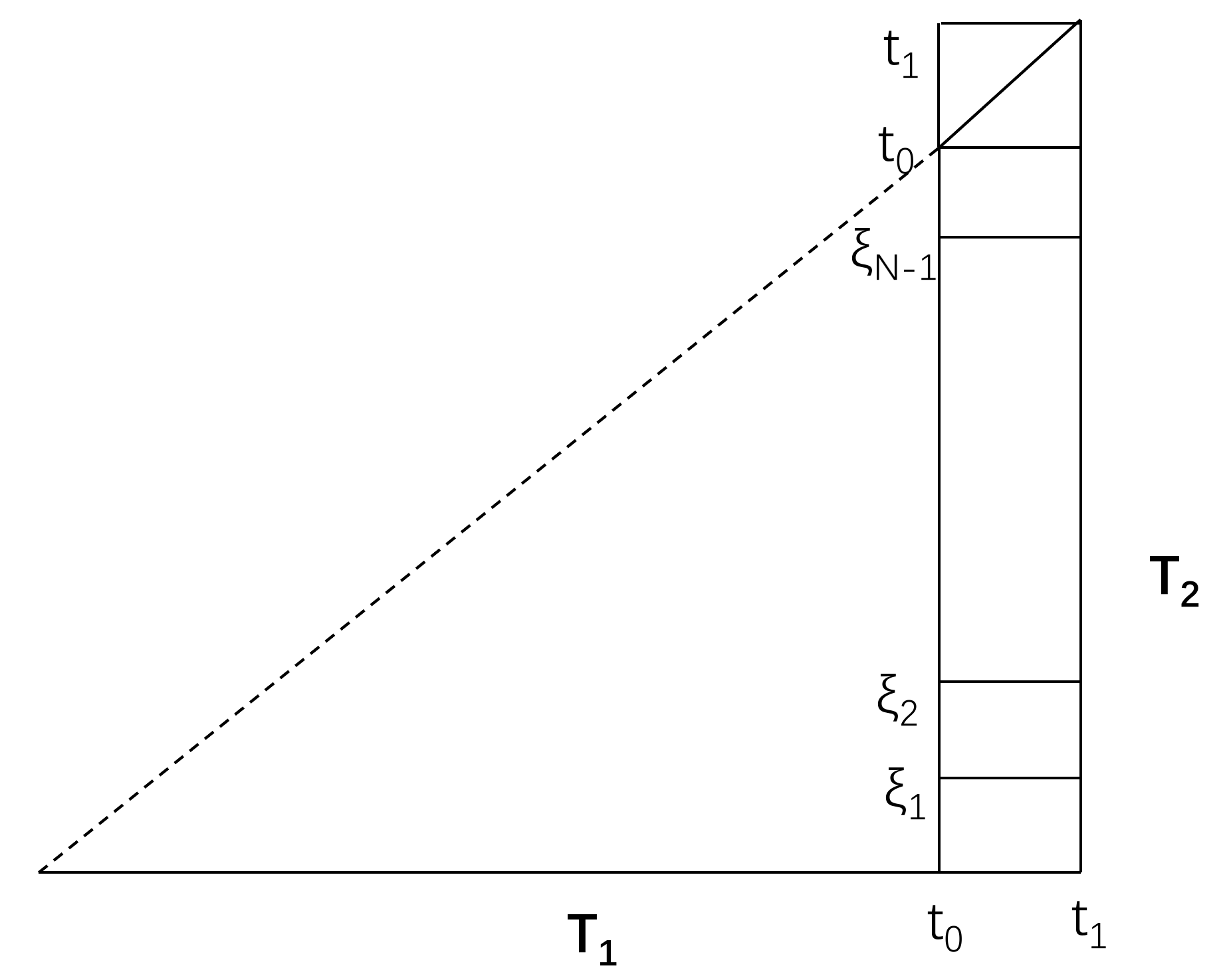}\\
  \caption{Each Small "rectangle" is contained in a product neighborhood so that Lemma \ref{lem-1} can be applied.}\label{picture}
\end{figure}

Substituting \eqref{eq-dev-2} into the last equality of \eqref{eq-dev-1} and using \eqref{eq-dev-group}, we know that
\begin{equation}
\dev(p,v)(t_1)=\dev(\dev(p,v^1)(t_1),v^2)(t_1).
\end{equation}
Similarly, one has
\begin{equation}
\dev(p,v)(t_1)=\dev(\dev(p,v^2)(t_1),v^1)(t_1).
\end{equation}
Moreover, by a similar argument, one can show that the parallel displacements along the two closed curves in the statement (2) is the identity map for $t=t_1$. This implies that $t_1\in J$ and we complete the proof of Claim 1.

We next come to show that the development of $v$ exists. Otherwise, by completeness of $(M,g)$, there is a $b\in (0,1)$ such that $\dev(p,v)$ exists on $[0,b]$, $\dev(p,v)(t)$ is in the interior of $M$ for $t\in [0,b)$ and  $\dev(p,v)(b)\in \p M$. By Claim 1, we know that $\dev(p,v)(b)$ can be joined to $\dev(p,v^1)(b)$ by the  curve $\dev(\dev(p,v^1)(b),v^2)(\cdot)$ which is tangential to $T_2$. This implies that $\dev(p,v^1)(b)\in \p M$. Because $\dev(p,v^1)$ is an interior curve, we know that $b=1$. This is a contradiction. By the same argument, we know that $\dev(p,v)$ is an interior curve. This completes the proof of (2).\\
(3) By the same argument as in the last paragraph of the proof for statement (2).
\end{proof}

Next, we have the following simple properties of curvature tensors for Riemannian manifolds with two nontrivial parallel distributions that are orthogonal complements of each other.
\begin{lem}\label{lem-curv}
Let $(M^n,g)$ be a Riemannian manifold, and $T_1$ and $T_2$ be two nontrivial parallel distributions that are orthogonal complements of each other on $M$. Then, \begin{enumerate}
\item for any $X,Y,Z,W\in T_pM$, suppose that $X=X_1+X_2$, $Y=Y_1+Y_2$, $Z=Z_1+Z_2$ and $W=W_1+W_2$ with $X_1,Y_1,Z_1,W_1\in T_1(p) $ and $X_2,Y_2,Z_2,W_2\in T_2(p)$. Then,
$$R(X,Y,Z,W)=R(X_1,Y_1,Z_1,W_1)+R(X_2,Y_2,Z_2,W_2);$$
\item let $\gamma:[0,1]\to M$ be a curve in $M$ that is tangential to $T_2$. Then, for any $X_1,Y_1,Z_1,W_1\in T_1(\gamma(0))$,
    \begin{equation}
    R(X_1,Y_1,Z_1,W_1)=R(P_0^1(\gamma)X_1,P_0^1(\gamma)Y_1,P_0^1(\gamma)Y_1,P_0^1(\gamma)Z_1).
    \end{equation}
\end{enumerate}
\end{lem}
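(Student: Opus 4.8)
\textbf{Proof proposal for Lemma \ref{lem-curv}.}

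The plan is to prove both items by exploiting the fact that a parallel distribution and its orthogonal complement are each preserved by the Levi-Civita connection, hence by parallel transport, and that the curvature operator therefore respects the splitting $T_pM = T_1(p)\oplus T_2(p)$.

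For item (1), I would first establish the key intermediate fact that $R(X,Y)Z = 0$ whenever $X\in T_1(p)$, $Y\in T_2(p)$ (or vice versa) — more precisely that $R(X,Y)$ maps each $T_i$ to itself and that the "mixed" curvature vanishes. The cleanest route: since $T_1$ is parallel, for any vector fields $A$ tangent to $T_1$ and any $B$, $\nabla_B A$ is again tangent to $T_1$; iterating, $\nabla_B\nabla_C A$ and $\nabla_{[B,C]}A$ are tangent to $T_1$, so $R(B,C)A \in T_1$. The same holds for $T_2$. Thus $R(X,Y)Z_1 \in T_1(p)$ and $R(X,Y)Z_2\in T_2(p)$ for all $X,Y$, which immediately gives $R(X,Y,Z,W) = g(R(X,Y)Z_1,W_1) + g(R(X,Y)Z_2,W_2)$ after decomposing $Z$ and $W$. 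To finish I would use the pair symmetry $R(X,Y,Z,W) = R(Z,W,X,Y)$ to transfer the same conclusion to the first pair: $R(Z_1,W_1,X,Y)$ only sees the $T_1$-components of $X,Y$ and $R(Z_2,W_2,X,Y)$ only the $T_2$-components. Combining, $R(X,Y,Z,W) = R(X_1,Y_1,Z_1,W_1) + R(X_2,Y_2,Z_2,W_2)$ with all cross terms gone. The main (very mild) obstacle here is just bookkeeping the symmetries carefully so that no mixed term survives; there is no real difficulty since $T_1,T_2$ being parallel is exactly what kills them.

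For item (2), the curve $\gamma$ is tangent to $T_2$, so its parallel transport $P_0^1(\gamma)$ preserves the splitting: it carries $T_1(\gamma(0))$ isometrically onto $T_1(\gamma(1))$ and similarly for $T_2$. I would consider the parallel vector fields $\tilde X_1(t) = P_0^t(\gamma)X_1$, etc., which are sections of $T_1$ along $\gamma$, and the scalar function $t\mapsto R(\tilde X_1(t),\tilde Y_1(t),\tilde Z_1(t),\tilde W_1(t))$. Its $t$-derivative is $(\nabla_{\gamma'}R)(\tilde X_1,\tilde Y_1,\tilde Z_1,\tilde W_1)$ since all four fields are parallel. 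It is not true in general that $\nabla R = 0$, so the point is rather that this particular component is constant: one can argue that for a manifold with $T_1,T_2$ parallel orthogonal complements, the $T_1$-curvature is "intrinsic to the $T_1$-leaves" and is parallel along $T_2$-directions. Concretely, $(\nabla_{\gamma'} R)(\tilde X_1,\tilde Y_1,\tilde Z_1,\tilde W_1) = \gamma'\big(R(\tilde X_1,\ldots)\big)$ because the fields are parallel; alternatively one invokes that de Rham-type splitting makes $R$ locally a direct sum $R_1\oplus R_2$ of the curvatures of the factors, and $R_1$ pulled back along a path lying in a single $M_2$-slice does not change. Given item (1) together with the fact that $P_0^1(\gamma)$ is the identity in the obvious local product chart on the $T_1$-component, the statement follows; the displayed equation in the lemma (with its evident typo, $Z_1$ and $W_1$ swapped in the last two slots) is then just the assertion that $R$ restricted to $T_1$ is invariant under $T_2$-parallel transport. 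The main obstacle is making the "$R$ is locally a product curvature" step rigorous without circularity — but since this is a purely local statement, one may simply pass to a local product neighborhood $U_1\times U_2$ as in the proof of Lemma \ref{lem-2}, where the claim is immediate from the product structure, and that suffices. \qec
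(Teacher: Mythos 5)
Your part (1) is essentially the paper's argument: parallelism of $T_1,T_2$ makes $\nabla$, hence $R(\cdot,\cdot)$, preserve each distribution, killing the components of $R$ that are mixed within the last pair, and pair symmetry kills those mixed within the first pair. One caveat (which applies to the paper's own write-up as well): these two facts alone do not dispose of the terms $R(X_1,Y_1,Z_2,W_2)$ and $R(X_2,Y_2,Z_1,W_1)$, i.e.\ the ``$a=b=1$, $c=d=2$'' blocks; your phrase ``all cross terms gone'' glosses over them. They do vanish, but one needs one more line, e.g.\ the first Bianchi identity: $R(X_1,Y_1,Z_2,W_2)=-R(Y_1,Z_2,X_1,W_2)-R(Z_2,X_1,Y_1,W_2)$, and both terms on the right are mixed in the first pair, hence zero. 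Easy to patch, but it should be said.

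Part (2) is where you genuinely diverge from the paper, and your route has a gap in exactly the case the paper needs. You reduce the statement to the local product structure, ``passing to a product neighborhood $U_1\times U_2$ as in the proof of Lemma \ref{lem-2}.'' But those product neighborhoods are asserted (and available from the classical local de Rham theorem) only at \emph{interior} points of $M$. Lemma \ref{lem-curv}(2) is applied in the proof of Theorem \ref{thm-main} to curves tangential to $T_2$ that may lie in $\p M$: the curve $\sigma_2=\dev(\dev(p,v^1)(1),v^2)$ starts at $\dev(p,v^1)(1)$, which can be a boundary point (an interior curve is only required to be interior for $t<1$), and then the whole $T_2$-leaf through it, hence all of $\sigma_2$, lies in $\p M$. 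At a boundary point the existence of a product neighborhood is not provided by Lemma \ref{lem-2} and is essentially a local version of the theorem being proved --- this is precisely the circularity you worried about, and appealing to Lemma \ref{lem-2} does not resolve it. A continuity argument rescues curves that merely touch $\p M$ at an endpoint, but not curves contained in the boundary. Note also that your ``concretely'' step, $(\nabla_{\gamma'}R)(\tilde X_1,\dots)=\gamma'(R(\tilde X_1,\dots))$, is just a restatement of what must be shown to vanish, not an argument.

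The paper avoids all of this with a purely tensorial computation that is insensitive to the boundary, and you could substitute it directly: take parallel extensions $X_1(t),Y_1(t),Z_1(t),W_1(t)$ along $\gamma$ (they stay in $T_1$ because $T_1$ is parallel), so the derivative of $t\mapsto R(X_1,Y_1,Z_1,W_1)$ is $(\nabla_{\gamma'}R)(X_1,Y_1,Z_1,W_1)$; by the second Bianchi identity this equals $-(\nabla_{W_1}R)(X_1,Y_1,\gamma',Z_1)-(\nabla_{Z_1}R)(X_1,Y_1,W_1,\gamma')$, and each term vanishes because, by part (1) together with the fact that parallel transport preserves $T_1$ and $T_2$, the mixed components of $R$ vanish identically, hence so do their covariant derivatives (here $\gamma'\in T_2$ sits in one slot against $X_1,Y_1,Z_1,W_1\in T_1$). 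Either adopt this argument or supply a genuine local splitting theorem at boundary points; as written, your proof of (2) only covers the boundaryless (or interior) case. Your observation about the typo in the displayed formula ($R(P_0^1X_1,P_0^1Y_1,P_0^1Z_1,P_0^1W_1)$ is intended) is correct.
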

\begin{proof}
(1) Since $T_i$ is parallel, $\nabla_\xi\eta\in T_i$ for any vector field $\xi$ and any vector field $\eta$ in $T_i$ with $i=1,2$. So,
\begin{equation*}
\begin{split}
R(X_1,Y_2,Z,W)=\vv<\nabla_Z\nabla_WX_1-\nabla_Z\nabla_WX_1-\nabla_{[Z,W]}X_1,Y_2>=0
\end{split}
\end{equation*}
since $\nabla_Z\nabla_WX_1-\nabla_Z\nabla_WX_1-\nabla_{[Z,W]}X_1\in T_1$. Similarly,
 \begin{equation*}
\begin{split}
R(X,Y,Z_1,W_2)=0.
\end{split}
\end{equation*}
This  gives us (1).\\
(2) Let $X_1(t),Y_1(t),Z_1(t),W_1(t)$ be the parallel extension of $X_1,Y_1,Z_1,W_1$ along $\gamma$ respectively. Because $T_1$ is parallel, $X_1(t),Y_1(t),Z_1(t),W_1(t)\in T_1$. So, by the second Bianchi identity and (1),
    \begin{equation*}
    \begin{split}
    &\frac{d}{dt}R(X_1(t),Y_1(t),Z_1(t),W_1(t))\\
    =&(\nabla_{\gamma'(t)}R)(X_1(t),Y_1(t),Z_1(t),W_1(t))\\
    =&-(\nabla_{W_1(t)}R)(X_1(t),Y_1(t),\gamma'(t),Z_1(t))-(\nabla_{Z_1(t)}R)(X_1(t),Y_1(t),W_1(t),\gamma'(t))\\
    =&0
    \end{split}
    \end{equation*}
    since $\gamma'\in T_2$. This gives us (2).

\end{proof}
We are now ready to prove Theorem \ref{thm-main}.
\begin{proof}[Proof of Theorem \ref{thm-main}] By (1) of Lemma \ref{lem-2}, we know that $M_2$ is a manifold without boundary.  On the other hand, because $T_1$ is transversal to $\p M$, we know that $M_1$ is a manifold with boundary.

Let $\gamma:[0,1]\to M_1\times M_2$ be an interior curve with $\gamma(0)=(p_1,p_2)$. Suppose that $\gamma(t)=(\gamma_1(t),\gamma_2(t))$. Let
$v^i(t)=P_t^0(\gamma_i)(\gamma'_i(t))$ for any $t\in [0,1]$ and $i=1,2$, and $v=(v^1,v^2)$. Then, $\gamma$ is the development of $v$. Let $\tilde v^i=(\iota_i)_{*p_i}(v^i)$ for $i=1,2,$ and $\tilde v=\tilde v^1+\tilde v^2=((\iota_1)_{*p_1}+(\iota_2)_{*p_2})v$. It is clear that $\tilde \gamma_i=\iota_i\circ\gamma_i$ is the developments of $\tilde v^i$ because $M_i$ is totally geodesic for $i=1,2$ . By (2) of Lemma \ref{lem-2}, we know that the development $\tilde\gamma$ of $\tilde v$ exists.

Let $X_i(t)$ be parallel vector fields along $\gamma$ for $i=1,2,3,4$. Suppose that
\begin{equation}
X_i(t)=(X_i^1(t),X_i^2(t))
\end{equation}
for $i=1,2,3,4$. Then, it is clear that $X_i^j$ is parallel along $\gamma_j$ for $i=1,2,3,4$ and $j=1,2$. Let $\tilde X_i^j(t)=(\iota_j)_{*\gamma_j(t)}X_i^j(t)$
for $i=1,2,3,4$ and $j=1,2$. By that $M_i$ is totally geodesic again, we know that $\tilde X^j_i$ is parallel along $\tilde \gamma_j$.

Let $\tilde X_i(0)=((\iota_1)_{*p_1}+(\iota_2)_{*p_2})(X_i(0))=\tilde X_i^1(0)+\tilde X_i^2(0)$ and $\tilde X_i(t)$ be the parallel extension of $\tilde X_i(0)$ along $\tilde \gamma$. By (2)  of Lemma \ref{lem-2}, we know that
\begin{equation}
\tilde X_i(1)=P_0^1(\sigma_2)(\tilde X_i^1(1))+P_0^1(\sigma_1)(\tilde X_i^2(1)).
\end{equation}
Here $\sigma_1(t)=\dev(\dev(p,v^2)(1),v^1)(t)$ which is tangential to $T_1$ and $\sigma_2=\dev(\dev(p,v^1)(1),v^2)(t)$ which is tangential to $T_2$. Then, by Lemma \ref{lem-curv}, we have
\begin{equation}
\begin{split}
&R_M(\tilde X_1(1),\tilde X_2(1),\tilde X_3(1),\tilde X_4(1))\\
=&R_M(\tilde X_1^1(1),\tilde X_2^1(1),\tilde X_3^1(1),\tilde X_4^1(1))+R_M(\tilde X_1^2(1),\tilde X_2^2(1),\tilde X_3^2(1),\tilde X_4^2(1))\\
=&R_{M_1}(X_1^1(1),X_2^1(1),X_3^1(1),X_4^1(1))+R_{M_2}( X_1^2(1), X_2^2(1), X_3^2(1),X_4^2(1))\\
=&R_{M_1\times M_2}(X_1(1),X_2(1),X_3(1),X_4(1)).
\end{split}
\end{equation}
Hence, by Theorem \ref{thm-CAH}, there is a local isometry $f:M_1\times M_2\to M$ such that $f(p_1,p_2)=p$ and $f_{*(p_1,p_2)}={\iota_1}_{*p_1}+{\iota_2}_{*p_2}$.

Conversely, for each interior curve $\tilde \gamma:[0,1]\to M$ in $M$, let
\begin{equation}
\tilde v(t)=P_t^0(\tilde \gamma)(\tilde \gamma'(t))
\end{equation}
for $t\in [0,1]$. Suppose that $\tilde v=\tilde v^1+\tilde v^2$ with $\tilde v^i\in T_i(p)$ for $i=1,2$. By Lemma \ref{lem-2}, we know that the developments $\tilde \gamma_i$ of $\tilde v_i$ exists for $i=1,2$. Because $M_i$  is the leaf of the foliation $T_i$ passing through $p$, there is a unique curve $\gamma_i:[0,1]\to M_i$ such that $\gamma_i(0)=p_i$ and $(\iota_i)_{*p_i}(\gamma'_i(t))=\tilde\gamma'_i(t)$ for $i=1,2$. Because $M_i$ is totally geodesic in $M$, $\gamma_i$ is the development of $v_i$ with ${\iota_i}_{*p_i}(v^i)=\tilde v^i$ for $i=1,2$. Let $\gamma=(\gamma_1,\gamma_2):[0,1]\to M_1\times M_2$. Then, $\gamma$ is the development of $v=(v^1,v^2)=((\iota_1)_{*p_1}+(\iota_2)_{*p_2})^{-1}(\tilde v)$. By the argument as before using Lemma \ref{lem-2} and Lemma \ref{lem-curv}, one can show that
\begin{equation}
R_{M}=\tau_{\gamma}^*R_{M_1\times M_2}.
\end{equation}
Hence, by Theorem \ref{thm-CAH}, there is a local isometry $h:M\to M_1\times M_2$ such that $h(p)=(p_1,p_2)$ and $h_{*p}=((\iota_1)_{*p_1}+(\iota_2)_{*p_2})^{-1}$.
Then, $f\circ h:M\to M$ is a local isometry with $f\circ h(p)=p$ and $(f\circ h)_{*p}=\id$. This implies that $f\circ h=\id$ and similarly $h\circ f=\id$. So $f$ is in fact an isometry. This completes the proof of the theorem.

\end{proof}


\begin{thebibliography}{99}
\bibitem{Am}Ambrose W., {\sl Parallel translation of Riemannian curvature.} Ann. of Math. (2) 64 (1956), 337--363.
\bibitem{BH}Blumenthal Robert A., Hebda James J., {\sl The generalized Cartan-Ambrose-Hicks theorem.} C. R. Acad. Sci. Paris S\'er. I Math. 305 (1987), no. 14, 647--651.
\bibitem{Ca} Cartan \'E., {\sl  Le\c{c}ons sur la G\'{e}om\'{e}trie des Espaces de Riemann.} (French) 2d ed. Gauthier-Villars, Paris, 1946. viii+378 pp.
\bibitem{Ch} Chen Zhiqi, {\sl The uniqueness in the de Rham--Wu decomposition.} J. Geom. Anal. 25 (2015), no. 4, 2687--2697.
\bibitem{dR}de Rham Georges. {\sl Sur la reductibilit\'e d'un espace de Riemann. (French)} Comment. Math. Helv. 26 (1952), 328--344.
\bibitem{EH}Eschenburg J.-H., Heintze E., {\sl Unique decomposition of Riemannian manifolds.} Proc. Amer. Math. Soc. 126 (1998), no. 10, 3075--3078.
\bibitem{FL}Foertsch Thomas, Lytchak Alexander, {\sl The de Rham decomposition theorem for metric spaces.} Geom. Funct. Anal. 18 (2008), no. 1, 120--143.
\bibitem{Hi}Hicks N., {\sl A theorem on affine connexions.} Illinois J. Math. (3) 1959, 242--254.
\bibitem{JO}Joyce D., {\sl On manifolds with corners.} Advances in geometric analysis, 225--258, Adv. Lect. Math. (ALM), 21, Int. Press, Somerville, MA, 2012.
\bibitem{KN}Kobayashi Shoshichi, Nomizu Katsumi, {\sl Foundations of differential geometry. Vol. I.} Reprint of the 1963 original. Wiley Classics Library. A Wiley-Interscience Publication. John Wiley \& Sons, Inc., New York, 1996. xii+329 pp.
\bibitem{Ma}Maltz R., {\sl The de Rham product decomposition.} J. Differential Geometry 7 (1972), 161--174.
\bibitem{Ne}O'Neill Barrett, {\sl Construction of Riemannian coverings.} Proc. Amer. Math. Soc. 19 (1968), 1278--1282.
\bibitem{SY2}Shi Yongjie, Yu Chengjie, {\sl Rigidity of a trace estimate for Steklov eigenvalues.}  J. Differential Equations 278 (2021), 50--59.
\bibitem{Wu}Wu H. {\sl On the de Rham decomposition theorem.} Illinois J. Math. 8 1964 291--311.
\bibitem{Yu1}Yu Chengjie, {\sl Fundamental Theorem for Submanifolds in General Ambient Spaces.} Preprint.
\end{thebibliography}
\end{document}